\date{}
\newtheorem{theorem}{Theorem}[section]
\newtheorem{proposition}[theorem]{Proposition}
\newtheorem{corollary}[theorem]{Corollary}
\newtheorem{remark}[theorem]{Remark}
\newcommand{\Cay}{{\rm Cay}}
\newcommand{\diam}{{\rm diam}}
\newcommand{\T}{\mathrm}
\newcommand{\dd}{{\rm d}}
\newcommand{\Z}{\mathbb{Z}}
\newcommand{\fm}{\frak{m}}
\title{Packing chromatic number of unitary Cayley graphs of $\Bbb Z_n$ and algorithmic approaches to it}
\author{Zahra Hamed-Labbafian $^{a,}$\thanks{Email: \texttt{hamedlabbafianzahra@gmail.com}} 
\and  
Mostafa Tavakoli $^{a,}$\thanks{Corresponding author, Email: \texttt{m$\_$tavakoli@um.ac.ir}} 
\and
Mojgan Afkhami $^{b,}$\thanks{Email: \texttt{Afkhami@neyshabur.ac.ir}} 
\and 
Sandi Klav\v{z}ar $^{c,d,e,}$\thanks{Email: \texttt{sandi.klavzar@fmf.uni-lj.si}}
}
\begin{document}

\maketitle

\begin{center}
$^a$ Department of Applied Mathematics,
Faculty of Mathematical Sciences, \\
Ferdowsi University of Mashhad,
Mashhad, 91775 Iran\\
\medskip

$^b$ Department of Mathematics, University of Neyshabur,\\
P.O.Box 91136-899, Neyshabur, Iran\\
\medskip

$^c$ Faculty of Mathematics and Physics, University of Ljubljana, Slovenia\\
\medskip

$^d$ Institute of Mathematics, Physics and Mechanics, Ljubljana, Slovenia\\
\medskip

$^e$ Faculty of Natural Sciences and Mathematics, University of Maribor, Slovenia
\end{center}

\date{}
\maketitle

\begin{abstract}
A packing $k$-coloring of a graph $G$ is a partition of $V(G)$ into $k$ disjoint non-empty classes $V_1, \dots, V_k$, such that if $u,v \in V_i$, $i\in [k]$, $u\ne v$, then the distance between $u$ and $v$ is greater than $i$. The packing chromatic number of $G$ is 
the smallest integer $k$ which admits a packing $k$-coloring of $G$. In this paper, the packing chromatic number of the unitary Cayley graph of $\mathbb{Z}_n$ is computed. Two metaheuristic algorithms for calculating the packing chromatic number are also proposed.
\end{abstract} 

\noindent
{\bf Keywords:} packing chromatic number; unitary Cayley graph; genetic algorithm; local search algorithm
\\

\noindent
{\bf AMS Subj.\ Class.\ (2020)}:  05C15, 05C85

\section{Introduction}
 
Packing coloring is a variant of coloring with applications in frequency assignment, resource allocation, and wireless network design. In this variant, vertices of a graph are assigned colors such that vertices sharing the same color are separated by a distance determined by the color itself. More precisely, a {\it packing $k$-coloring} of a graph $G = (V(G), E(G))$ is a partition of $V(G)$ into $k$ disjoint non-empty color classes $V_1, \dots, V_k$, such that $V_i$, $i\in [k]$, is an {\it $i$-packing}, that is, for each two distinct vertices $u,v \in V_i$ we have $\dd_G(u,v) \ge i+1$. The smallest integer $k$ which admits a packing $k$-coloring of  $G$ is  the {\it packing chromatic number} $\chi_{\rho}(G)$ of $G$. 

The packing chromatic number was initially explored under the name broadcast chromatic number by Goddard et al.\ in~\cite{Goddard}. The terminology and notation used today was proposed in~\cite{Bresar}. This coloring concept has already been extensively and deeply researched, and the review article~\cite{Bresar2} published in 2020 contains 68 references. Research continued with recent studies of the packing chromatic number of iterated {M}ycielskians~\cite{bidine-2023} and hypercubes~\cite{gregor-2024}, and with investigations of variants such as distance dominator packing coloring~\cite{ferme-2021}, partial and quasi-packing packing coloring~\cite{grochowski-2025}, and Grundy packing coloring~\cite{Greedy}. Several recent papers deal also with criticality concepts, cf.~\cite{ferme-2022, klavzar-2023}. The greatest emphasis in recent times, however, has been on $S$-packing colorings, especially on subcubic graphs, see~\cite{bresar-2025, kostochka-2021, liu-2020, mortada-2024, mortada-2025, yang-2023}. 

It should be stressed that it is intrinsically difficult to determine the packing chromatic number. It was proved already in the seminal paper~\cite{Goddard} that the decision problem whether an input graph admits a packing $k$-coloring is NP-complete for $k=4$, even when restricted to planar graphs. This finding was followed up by Fiala and Golovach~\cite{fiala-2010} with the breakthrough result asserting that this decision problem is is NP-complete for trees. Furthermore, it was later proven that the decision problem remains NP-complete when restricted to chordal graphs with diameter at least $3$~\cite{kim-2018}.

For these reasons, it is desirable to find different heuristic and/or approximation algorithmic approaches to the packing chromatic number. We do this in Section~\ref{sec:algorithms}  by considering the  local search and the genetic algorithm for computing the packing chromatic number. In Section~\ref{sec:experiment} we then report experimental results on these two approaches and make their comparison with the recently proposed greedy approach~\cite{Greedy}. Before turning our attention to algorithms, we determine in Section~\ref{sec:unitary-Cayley} the packing chromatic number of unitary Cayley graphs of $\Bbb Z_n$. 

In the reminder of this section we recall some definitions and notations, for undefined terms we refer to~\cite{graph}. Let $G = (V(G), E(G))$ be a graph. We use the notation $x\sim_G y$ to denote that $xy\in E(G)$. A subset $S$ of $V(G)$ is \textit{independent} if no two vertices of $S$ are adjacent. The cardinality of a largest independent set is the \textit{independence number} $\alpha(G)$ of $G$. The \textit{distance} $\T{d}_G(a,b)$ between vertices $a$ and $b$ of a connected graph $G$ is the length of a shortest $a,b$-path. The diameter $\diam(G)$ of $G$ is the length of a longest shortest path in $G$. By $K_{n_1,\dots,n_m}$ we denote the complete $m$-partite graphs with parts of size $n_i$, $i\in [m]$. The {\it direct product} $G_1\times G_2$ of graphs $G_1$ and $G_2$ is the graph with vertex set  $V(G_1) \times V(G_2)$ and $(u_1,v_1) \sim_{G_1\times G_2} (u_2,v_2)$ if $u_1 \sim_{G_1} u_2$ and $v_1 \sim_{G_1} v_2$.  

\section{Packing chromatic number of unitary Cayley graphs of $\Bbb Z_n$}
\label{sec:unitary-Cayley}

In this section we determine the packing chromatic number of the unitary Cayley graph of $\Z_n$, the ring of integers modulo $n$.

Let $R$ be a finite commutative ring with nonzero identity, and let $R^\times$ denote the set of all unit elements of $R$. The {\it unitary Cayley graph} $G_R=\text{Cay}(R,R^{\times})$ of $R$ is a graph with the vertex set $R$ and two vertices $x$ and $y$ are adjacent if $x-y \in R^{\times}$. We refer to~\cite{burcroff-2023, dolzan-2024, ratt-2024} for some recent investigations of unitary Cayley graphs, see also~\cite{Akhtar, EUZn, U1}.

If $R$ is a finite commutative ring, then by~\cite[p.~752]{AAlg} we can write $R\cong R_1 \times\dots \times R_t$, where  $R_i$, $i\in [t]$, is a finite local ring with maximal ideal $\fm_i$.  This decomposition is unique up to permutation of factors. We denote  the (finite) residue field $\frac{R_i}{\fm_i}$ by $K_i$ and  $f_i = |K_i|=\frac{|R_i|}{|\fm_i|}$. We also assume (after appropriate permutation of factors) that $f_1\leqslant \dots \leqslant  f_t$.

The following proposition is a basic consequence of the definition of the unitary Cayley graphs, cf.~\cite[Proposition 2.2]{Akhtar}. 

\begin{proposition} 
\label{basic}
If $R$ is a finite commutative ring, then the following statements hold.
\begin{itemize} 
\item[(a)] The graph $G_R$ is a $|R^{\times}|$-regular graph.
\item[(b)]  If $R$ is a local ring with maximal ideal $\fm$,  then $G_R$ is a complete multipartite graph whose partite sets are the cosets of $\fm$ in $R$. In particular, $G_R$ is a complete graph if and only if $R$ is a field.
\item[(c)] If $R \cong R_1 \times \cdots \times R_t$ is a product of local rings, then $G_R \cong \times_{i=1}^{t} G_{R_i}$. Hence, $G_R$ is the direct product of complete multipartite graphs.
\end{itemize}
\end{proposition}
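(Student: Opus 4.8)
The plan is to verify each of the three statements directly from the definition of the unitary Cayley graph, in each case exploiting the structure of the unit group $R^{\times}$. All three are essentially matters of unwinding definitions, so the work lies in identifying the right algebraic fact about units to feed into each part.

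For (a) I would fix a vertex $x \in R$ and note that its neighborhood is exactly $\{y : x - y \in R^{\times}\}$. The map $u \mapsto x - u$ is then a bijection from $R^{\times}$ onto this neighborhood, with inverse $y \mapsto x - y$; hence every vertex has degree $|R^{\times}|$, which is the claim. For (b) the key input is the defining property of a local ring: the set of non-units is precisely the maximal ideal, so $R^{\times} = R \setminus \fm$. Consequently $x \sim_{G_R} y$ holds if and only if $x - y \notin \fm$, that is, if and only if $x$ and $y$ lie in distinct cosets of $\fm$. This exhibits the cosets of $\fm$ as the partite sets of a complete multipartite graph, two vertices being adjacent exactly when they belong to different cosets. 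There are $|R/\fm| = f$ such cosets, each of size $|\fm|$, and $G_R$ degenerates to a complete graph precisely when every coset is a singleton, i.e.\ when $\fm = 0$, which is equivalent to $R$ being a field.

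For (c) I would first record the standard fact that an element $(u_1,\dots,u_t)$ of the product ring $R_1 \times \cdots \times R_t$ is a unit if and only if each coordinate $u_i$ is a unit in $R_i$, so that $R^{\times} = R_1^{\times} \times \cdots \times R_t^{\times}$. Writing $x = (x_1,\dots,x_t)$ and $y = (y_1,\dots,y_t)$, adjacency $x \sim_{G_R} y$ means $x - y \in R^{\times}$, which by the previous sentence is equivalent to $x_i - y_i \in R_i^{\times}$ for every $i$, that is, to $x_i \sim_{G_{R_i}} y_i$ for all $i$. This is exactly the adjacency condition defining the direct product $\times_{i=1}^{t} G_{R_i}$, so the identity map on the common vertex set $R = R_1 \times \cdots \times R_t$ is a graph isomorphism $G_R \cong \times_{i=1}^{t} G_{R_i}$. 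Combining this with part (b), each factor $G_{R_i}$ is complete multipartite, which yields the final assertion.

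I do not expect a genuine obstacle here; the proposition is a foundational consequence of the definitions. The only points demanding a little care are the two algebraic inputs just used: the characterization of units in a direct product of rings needed for (c), and the local-ring fact in (b) that the non-units coincide with the maximal ideal. Once these are in place, all three statements follow immediately by translating the membership condition $x - y \in R^{\times}$ into the appropriate graph-theoretic language.
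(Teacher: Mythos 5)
Your proof is correct, and all three parts are verified without gaps: the translation bijection $u \mapsto x-u$ for (a), the local-ring fact $R^{\times} = R \setminus \fm$ for (b), and the coordinatewise characterization of units in a product ring for (c) are exactly the right algebraic inputs. Note that the paper itself gives no proof of this proposition --- it is stated as a basic consequence of the definitions with a citation to Akhtar et al.\ --- so your argument serves as the standard self-contained verification underlying that citation rather than an alternative to anything in the paper.
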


\begin{proposition} {\rm \cite[Theorem 3.1]{Akhtar}}
\label{Akhtar}
If $R\cong R_1 \times\dots \times R_t$  is a finite, commutative ring, then
\begin{eqnarray*}
	{\rm diam}(G_{R})=\left\lbrace \begin{array}{ll}
		1; & t=1 \ \T{and} \ R \ \T{is \ a \ field},\\
	2; & t=1 \ \T{and} \ R \ \T{is  \ not \ a \ field},\\
	2; & t \geqslant 2, f_1 \geqslant 3,\\
	3; & t \geqslant 2, f_1=2, f_2 \geqslant 3,\\
		\infty; & t \geqslant 2, f_1=f_2=2.\\
	\end{array}\right.
\end{eqnarray*}
\end{proposition}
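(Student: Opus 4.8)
The plan is to handle the single-factor case $t=1$ directly through Proposition~\ref{basic}(b), and to reduce the case $t\geqslant 2$ to an analysis of walk lengths in a direct product of complete multipartite graphs via Proposition~\ref{basic}(c). For $t=1$, Proposition~\ref{basic}(b) tells us that $G_R$ is complete multipartite with partite sets the cosets of $\fm$. If $R$ is a field then $G_R$ is complete and $\diam(G_R)=1$; if $R$ is not a field then each coset has size $|\fm|\geqslant 2$, so any two vertices of a common coset are non-adjacent but joined through any vertex of another coset (there are at least two cosets), giving $\diam(G_R)=2$.

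For $t\geqslant 2$ we have $G_R\cong \times_{i=1}^{t} G_{R_i}$, where each $G_{R_i}$ is complete $f_i$-partite. The central tool is the standard fact that a walk of length $\ell$ joins $(a_1,\dots,a_t)$ to $(b_1,\dots,b_t)$ in a direct product precisely when, for every $i$, there is a walk of length exactly $\ell$ from $a_i$ to $b_i$ in $G_{R_i}$; consequently $d_{G_R}(a,b)=\min\{\ell : \ell\in W_i(a_i,b_i) \text{ for all } i\}$, where $W_i(a_i,b_i)$ is the set of lengths of walks from $a_i$ to $b_i$ in $G_{R_i}$. I would then record these walk-length sets in a complete $f_i$-partite graph. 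If $f_i\geqslant 3$ the factor is non-bipartite, and a short case check (using a triangle detour to produce odd walks) shows that $W_i(a_i,b_i)$ contains every integer $\ell\geqslant 2$ no matter how $a_i$ and $b_i$ sit, and in addition contains $\ell=1$ exactly when $a_i\sim b_i$. If $f_i=2$ the factor is bipartite, so $W_i(a_i,b_i)$ consists of even integers when $a_i,b_i$ lie on the same side (including $a_i=b_i$) and of odd integers when they lie on opposite sides.

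With these sets in hand, the three subcases with $t\geqslant 2$ follow by intersecting them over $i$. If $f_1\geqslant 3$, every factor is non-bipartite, so $2\in W_i(a_i,b_i)$ for all $i$ and every pair is at distance at most $2$; since $G_R$ is not complete some pair is non-adjacent, whence $\diam(G_R)=2$. If $f_1=2$ and $f_2\geqslant 3$, exactly one factor is bipartite, and its parity constraint can rule out length $2$ for a suitable non-adjacent pair (take it adjacent in the bipartite coordinate and equal in a non-bipartite one); the smallest common length is then $3$, while no pair ever needs more than $3$, so $\diam(G_R)=3$. If $f_1=f_2=2$, two bipartite factors impose independent parities that one can force to be incompatible (even in the first coordinate, odd in the second), making $W_1\cap W_2=\varnothing$ for some pair, so $G_R$ is disconnected and $\diam(G_R)=\infty$. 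The main obstacle is the careful bookkeeping of the walk-length sets $W_i$ --- especially verifying exactly which short odd walks exist, which is precisely where the hypothesis $f_i\geqslant 3$ enters and where one implicitly reproves Weichsel's connectivity criterion for direct products; once these sets are pinned down, each diameter value drops out of a one-line intersection argument.
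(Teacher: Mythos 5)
Your proof is correct, but note that the paper itself contains no proof of this statement: it is imported verbatim as \cite[Theorem 3.1]{Akhtar}, so the only ``comparison'' available is between your self-contained argument and an external citation. Your argument is sound at every step. The $t=1$ case follows exactly as you say from Proposition~\ref{basic}(b), since in a local ring the non-units form the ideal $\fm$, so non-adjacency means lying in a common coset, and a non-field local ring has $|\fm|\geqslant 2$ and at least two cosets. For $t\geqslant 2$, your key tool --- that a walk of length $\ell$ exists in a direct product between $(a_1,\dots,a_t)$ and $(b_1,\dots,b_t)$ if and only if each factor admits a walk of length exactly $\ell$ between $a_i$ and $b_i$ --- is the standard fact underlying Weichsel's theorem, and your bookkeeping of the walk-length sets is accurate: in a complete multipartite graph with at least three parts every length $\ell\geqslant 2$ is realized between any (ordered) pair of vertices because triangles supply odd closed walks, while a complete bipartite factor forces the parity of every walk. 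Intersecting these sets then yields diameter $2$ when $f_1\geqslant 3$, the distance-$3$ pair (adjacent in the bipartite coordinate, equal in a non-bipartite one) together with the universal upper bound $3$ when $f_1=2\leqslant 3\leqslant f_2$, and an empty intersection (hence disconnection) when $f_1=f_2=2$. Two trivial points you gloss over, both easily repaired: for $f_1\geqslant 3$ you should justify that $G_R$ is not complete (any two distinct vertices agreeing in some coordinate are non-adjacent, and such pairs exist since $t\geqslant 2$), and in the distance-$3$ construction the coordinates $i\geqslant 3$ must be specified (taking them equal works, since $f_i\geqslant 3$ there). What your route buys over the paper's citation is self-containedness of Section~2 and an explicit display of where the hypothesis $f_i\geqslant 3$ (existence of triangles) enters; this is also essentially the structure of the original proof in the cited source, which likewise works through the decomposition of Proposition~\ref{basic}(c).
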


For the rest of this section, some preparation is needed. Let $n=p_1^{r_1}\dots p_t^{r_t}$ be the prime factorization of $n$, where $p_i$s are primes with $p_1 < \dots < p_t$. Then $\Z_n \cong \Z_{p_1^{r_1}} \times \dots \times \Z_{p_t^{r_t}}$. Also  $\Z_{p_i^{r_i}}$ is a local ring with the maximal ideal $\fm_i=\{rp_i \mid r\in \Z_{p_i^{r_i}}\}$ with $|\fm_i|=p_i^{r_i-1}$ and the number of cosets of $\fm_i$ in $\Z_{p_i^{r_i}}$ is equal to $p_i$, for each $i \in [t]$.
 
\begin{remark}\label{struc}
By Proposition~\ref{basic} and the above preparation, $G_{\Z_{p_i^{r_i}}}$, $i\in [t]$, is isomorphic to the complete $p_i$-partite graph $K_{p_i^{r_i-1}, \dots ,p_i^{r_i-1 }}$. Also, since $\Z_n \cong \Z_{p_1^{r_1}} \times \dots \times \Z_{p_t^{r_t}}$, the third part of Proposition~\ref{basic}, yields
 \[G_{\Z_n}\cong \times_{i=1}^{t} K\underbrace{_{p_i^{r_i-1}, \dots ,p_i^{r_i-1 }}}_{p_i}.\]
 \end{remark}

Using Proposition~\ref{Akhtar}, the following result can be deduced.
 
\begin{corollary}\label{diam}
If $p_1^{r_1}\dots p_t^{r_t}$ is the prime factorization of $n$, where $p_1 < \dots < p_t$, then $G_{\Z_n}$ is connected and we have
\begin{eqnarray*}
	{\rm diam}(G_{\Z_n})=\left\lbrace \begin{array}{ll}
		1; & t=1, r_1 =1, \\
	2; & t=1, r_1 >1, \\
	2; & t \geqslant 2, p_1 \geqslant 3, \\
	3; & t \geqslant 2, p_1=2.\\
			\end{array}\right.
\end{eqnarray*}
\end{corollary}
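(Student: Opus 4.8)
The plan is to derive the result as a direct specialization of Proposition~\ref{Akhtar}, so the first step is to read off the residue-field sizes $f_i$ attached to the local decomposition of $\Z_n$. Using the preparation preceding Remark~\ref{struc}, we have $\Z_n \cong \Z_{p_1^{r_1}} \times \dots \times \Z_{p_t^{r_t}}$, where each $\Z_{p_i^{r_i}}$ is a local ring with maximal ideal $\fm_i = p_i\Z_{p_i^{r_i}}$ and $|\fm_i| = p_i^{r_i-1}$. Hence
\[
f_i = \frac{|\Z_{p_i^{r_i}}|}{|\fm_i|} = \frac{p_i^{r_i}}{p_i^{r_i-1}} = p_i,
\]
so the residue-field size of the $i$-th factor is simply the prime $p_i$.

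The crucial observation is that the hypothesis $p_1 < \dots < p_t$ forces $f_1 < \dots < f_t$; in particular the two smallest residue-field sizes are distinct. This immediately excludes the last case of Proposition~\ref{Akhtar} (the one with $f_1 = f_2 = 2$), which is the only case producing an infinite diameter. Consequently $G_{\Z_n}$ is connected for every $n$, and it remains only to match the four remaining alternatives. For $t = 1$ the ring $\Z_{p_1^{r_1}}$ is a field precisely when $\fm_1 = 0$, i.e.\ when $r_1 = 1$; this yields $\diam(G_{\Z_n}) = 1$ if $r_1 = 1$ and $\diam(G_{\Z_n}) = 2$ if $r_1 > 1$, matching the first two lines of Proposition~\ref{Akhtar}. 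For $t \geq 2$ with $p_1 \geq 3$ we have $f_1 = p_1 \geq 3$, giving diameter $2$; while for $t \geq 2$ with $p_1 = 2$ we have $f_1 = 2$ and, since $p_2 > p_1$, also $f_2 = p_2 \geq 3$, which places us in the fourth case of the proposition and gives diameter $3$.

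There is essentially no computational obstacle here; the entire content of the argument is the identification $f_i = p_i$ together with the translation of the condition ``$p_1 \geq 3$ versus $p_1 = 2$'' into the residue-field conditions of Proposition~\ref{Akhtar}. The only point deserving emphasis is that the strict inequalities among the primes are exactly what guarantees connectivity, by ruling out the $f_1 = f_2 = 2$ branch. Thus the main ``work'' is bookkeeping: assembling the five cases of Proposition~\ref{Akhtar} into the four cases of the corollary, observing that the fifth collapses away under the distinctness of the $p_i$.
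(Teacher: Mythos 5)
Your proposal is correct and follows exactly the route the paper intends: the paper states the corollary as a direct deduction from Proposition~\ref{Akhtar}, and your argument---identifying $f_i = p_i$ via the local decomposition $\Z_n \cong \Z_{p_1^{r_1}} \times \dots \times \Z_{p_t^{r_t}}$, noting that distinctness of the primes rules out the $f_1 = f_2 = 2$ branch (hence connectivity), and matching the remaining cases---is precisely that deduction, written out in full.
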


Now all is ready to determine the packing chromatic number of $G_{\Z_n}$. The case when $n$ is a prime is trivial since in that case $G_{\Z_n}$ is a complete graph. By Corollary~\ref{diam} the remaining cases are of diameter two or three. For the first case we recall the following result from the seminal paper~\cite{Goddard}, see also the survey~\cite[Proposition 2.5]{Bresar2}.

\begin{proposition}
\label{diam2}
If $G$ is a graph, then
\[\chi_\rho(G) \le n(G)-\alpha(G)+1,\]
with equality if 
${\rm diam}(G)=2$.
\end{proposition}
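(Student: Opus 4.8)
The plan is to prove the inequality by an explicit construction and then to establish the reverse inequality under the diameter hypothesis by a short counting argument.

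First I would prove the upper bound $\chi_\rho(G) \le n(G)-\alpha(G)+1$. Let $S$ be a maximum independent set of $G$, so that $|S|=\alpha(G)$. Assign color $1$ to every vertex of $S$; since a $1$-packing is precisely an independent set (two vertices may share color $1$ only if their distance is at least $2$, that is, only if they are nonadjacent), the class $V_1=S$ is legitimate. The remaining $n(G)-\alpha(G)$ vertices each receive a distinct color drawn from $\{2,\dots,n(G)-\alpha(G)+1\}$. A singleton color class is vacuously an $i$-packing for every $i$, so this is a valid packing coloring using $n(G)-\alpha(G)+1$ colors, which gives the claimed bound.

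Next I would prove equality when $\diam(G)=2$. The key observation is that if $\diam(G)=2$, then $\dd_G(u,v)\le 2$ for all distinct $u,v$, so no color class $V_i$ with $i\ge 2$ can contain two vertices: that would force their distance to be at least $i+1\ge 3$, which is impossible. Hence in any packing $k$-coloring each of $V_2,\dots,V_k$ is a singleton, while $V_1$, being a $1$-packing, is an independent set and therefore satisfies $|V_1|\le\alpha(G)$. Summing the class sizes yields $n(G)=\sum_{i=1}^{k}|V_i|\le\alpha(G)+(k-1)$, whence $k\ge n(G)-\alpha(G)+1$. Combined with the upper bound, this forces $\chi_\rho(G)=n(G)-\alpha(G)+1$.

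The argument is essentially elementary; the only point that requires care is recognizing that the diameter-$2$ hypothesis collapses every higher color class to a single vertex, which is exactly what makes the trivial construction optimal. I do not expect any genuine obstacle beyond isolating this observation.
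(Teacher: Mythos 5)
Your proof is correct. Note that the paper does not prove this proposition at all---it is quoted from Goddard et al.\ and the survey of Bre\v{s}ar et al.---and your argument (color a maximum independent set with color $1$, give the remaining $n(G)-\alpha(G)$ vertices distinct colors, then observe that $\diam(G)=2$ forces every class $V_i$ with $i\ge 2$ to be a singleton, so $n(G)\le \alpha(G)+k-1$) is exactly the standard argument underlying that citation, so there is nothing to correct or compare beyond that.
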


Assume that $n=p^r$, where $p$ is a prime number and $r>1$.
In this situation, by Corollary \ref{diam}, we have ${\rm diam}(G_{\Z_{p^r}})=2$, and so by 
Proposition \ref{diam2}, we have 
$\chi_\rho(G_{\Z_{p^r}})=p^r-\alpha(G_{\Z_{p^r}})+1$.
By Remark \ref{struc}, $G_{\Z_{p^r}}$ is isomorphic to the $p$-partite $K_{p^{r-1}, \dots ,p^{r-1 }}$, for which we have  $\alpha(G_{\Z_{p^r}})=p^{r-1}$.  Therefore, if $n=p^r$, $r>1$, then 
$$\chi_\rho(G_{\Z_n})=p^r-p^{r-1}+1.$$
The second subcase when the diameter of the unitary Cayley graph is two in covered with the next result. 

\begin{theorem}\label{33}
If $n=p_1^{r_1} \dots p_t^{r_t}$, where $t>1$ and $3 \leqslant p_1 < \dots < p_t$, then
$$\chi_\rho(G_{\Z_n})=p_1^{r_1-1} p_2^{r_2} \dots p_t^{r_t}(p_1-1)+1.$$
 \end{theorem}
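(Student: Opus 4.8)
The plan is to reduce the computation to that of the independence number $\alpha(G_{\Z_n})$ and then evaluate the latter exactly. Since $t>1$ and $p_1\geqslant 3$, Corollary~\ref{diam} gives $\diam(G_{\Z_n})=2$, so Proposition~\ref{diam2} applies with equality and yields $\chi_\rho(G_{\Z_n})=n-\alpha(G_{\Z_n})+1$, where $n=|V(G_{\Z_n})|=p_1^{r_1}\cdots p_t^{r_t}$. A direct expansion shows that the claimed value of $\chi_\rho(G_{\Z_n})$ equals $n-n/p_1+1$, so it suffices to prove that $\alpha(G_{\Z_n})=n/p_1=p_1^{r_1-1}p_2^{r_2}\cdots p_t^{r_t}$. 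I will use throughout that, by definition, distinct $x,y\in\Z_n$ are adjacent in $G_{\Z_n}$ exactly when $x-y$ is a unit, i.e.\ when $\gcd(x-y,n)=1$.

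For the lower bound $\alpha(G_{\Z_n})\geqslant n/p_1$, I would exhibit an explicit independent set, namely $S=\{x\in\Z_n:\ p_1\mid x\}$, the set of multiples of $p_1$. This set has exactly $n/p_1$ elements, and for any two distinct $x,y\in S$ the difference $x-y$ is again a multiple of $p_1$, hence not coprime to $n$ and therefore not a unit; thus no two vertices of $S$ are adjacent and $S$ is independent.

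The crux is the matching upper bound $\alpha(G_{\Z_n})\leqslant n/p_1$, and here the key idea is to partition $\Z_n$ into cliques. I would group the representatives $0,1,\dots,n-1$ into the $n/p_1$ consecutive blocks $B_k=\{kp_1,kp_1+1,\dots,kp_1+p_1-1\}$ for $k=0,\dots,n/p_1-1$; this is a genuine partition because $p_1\mid n$. Each block $B_k$ is a clique: for two distinct elements of $B_k$ the difference has absolute value strictly between $0$ and $p_1$, and since $p_1$ is the smallest prime dividing $n$, such a difference is coprime to $n$ and hence a unit. Because any independent set meets each clique in at most one vertex, an independent set can contain at most $n/p_1$ vertices, giving $\alpha(G_{\Z_n})\leqslant n/p_1$. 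Combining the two bounds yields $\alpha(G_{\Z_n})=n/p_1$, and substituting into $\chi_\rho(G_{\Z_n})=n-\alpha(G_{\Z_n})+1$ gives the stated formula after simplification.

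The only genuinely delicate point is the upper bound, and it hinges entirely on the observation that differences of magnitude less than $p_1$ are units; everything else is routine. One could alternatively derive $\alpha(G_{\Z_n})\leqslant n/p_1$ from the clique--coclique bound for vertex-transitive graphs, since $G_{\Z_n}$ is a Cayley graph with clique number $\omega(G_{\Z_n})=p_1$, but the explicit clique partition above is more self-contained.
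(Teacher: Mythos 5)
Your proof is correct, and while it shares the paper's skeleton---Corollary~\ref{diam} gives $\diam(G_{\Z_n})=2$, and Proposition~\ref{diam2} then reduces everything to computing $\alpha(G_{\Z_n})$---your evaluation of the independence number takes a genuinely different route. The paper works with the direct-product decomposition of Remark~\ref{struc}: the maximum independent set is exhibited as a ``coordinate slab'' $A^{p_1}_1$ (all vertices whose first coordinate lies in a fixed part of the complete $p_1$-partite factor), and the upper bound is argued by claiming that the elements of an independent set admit at most $p_1^{r_1-1}$ possibilities in their first component. You instead argue inside $\Z_n$ itself: the multiples of $p_1$ form an independent set of size $n/p_1$ (their differences are non-units), and the partition of $\{0,\dots,n-1\}$ into $n/p_1$ blocks of $p_1$ consecutive residues is a clique partition, since any difference $d$ with $0<|d|<p_1$ is coprime to $n$ because $p_1$ is the smallest prime factor; hence $\alpha(G_{\Z_n})\le n/p_1$. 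Your clique-partition bound buys something real: it is airtight and self-contained, whereas the paper's projection claim is delicate, because an independent set in a direct product need not project onto an independent set of a factor---in $G_{\Z_{15}}\cong K_3\times K_5$ the set $\{(0,0),(1,0),(2,0)\}$ is independent yet has three distinct first coordinates, while $p_1^{r_1-1}=1$ there---so that step of the paper needs an additional argument (e.g.\ exactly your clique cover, or the clique--coclique bound for vertex-transitive graphs that you mention) which your proof simply avoids. What the paper's viewpoint buys in exchange is structural generality: the coordinate argument is phrased so as to fit the framework of unitary Cayley graphs of arbitrary finite commutative rings via their local decomposition, whereas your number-theoretic argument is specific to $\Z_n$.
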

 
\begin{proof}
By Corollary~\ref{diam}, ${\rm diam}(G_{\Z_{n}})=2$. Hence by Proposition~\ref{diam2} it suffices to determine $\alpha(G_{\Z_{n}})$. By Remark~\ref{struc}, $G_{\Z_{n}}$ is isomorphic to the direct product of $t$ $p_i$-partite graphs $K_{p_i^{r_i-1}, \dots ,p_i^{r_i-1 }}$, $i \in [t]$. Also, $G_{\Z_{p_i}}$, $i \in [t]$, is isomorphic to the $p_i$-partite graph $K_{p_i^{r_i-1}, \dots ,p_i^{r_i-1 }}$. Let $V^{p_i}_1$, $\dots$, $V^{p_i}_{p_i}$ be the $p_i$ parts of $G_{\Z_{p_i}}$, and $V ^{p_i}_j=\{a^{p_i}_{j,1}, \dots, a^{p_i}_{j,p_i^{r_i-1}}\}$, $j \in [p_i]$. Clearly $|V ^{p_i}_j|=|V ^{p_i}_{j'}|=p_i^{r_i-1}$, for each $j ,j' \in [p_i]$. For $j\in [p_i]$ set 
\[A^{p_i}_j=V(G_{\Z_{p_1}}) \times \dots \times V(G_{\Z_{p_{i-1}}}) \times V^{p_i}_j \times V(G_{\Z_{p_{i+1}}}) \times \dots \times V(G_{\Z_{p_t}}).\]
The set $A^{p_i}_j$, $j \in [p_i]$, is an independent set of order $p_1^{r_1}\dots p_{i-1}^{r_{i-1}} p_i^{r_i-1}  p_{i+1}^{r_{i+1}}\dots p_t^{r_t}$, and 
\[V(G_{\Z_{n}})=\bigcup_{i=1}^{t}\bigcup_{j=1}^{p_i} A^{p_i}_j.\]
Since $ p_1 < \dots < p_t$ and $| \bigcup_{j=1}^{p_1} A^{p_1}_j|=\dots=|\bigcup_{j=1}^{p_t} A^{p_t}_j|$, we have $|A^{p_1}_1|> \dots > |A^{p_t}_1|$.

Let $S$ be an arbitrary independent set of $G_{\Z_n}$. Since there are at most $p_1^{r_1-1}$ possibilities for the first component of the elements of $S$,  we must have $|S| \leqslant p_1^{r_1-1}p_2^{r_2}\dots p_{t}^{r_t}$, which means that $|S| \leqslant |A^{p_1}_1|$. 
Therefore, $A^{p_1}_1$ is an independent set of greatest order, which means that $\alpha(G_{\Z_n})=|A^{p_1}_1|=p_1^{r_1-1}p_2^{r_2}\dots p_{t}^{r_t}$. Proposition~\ref{diam2} completes the argument. 
\end{proof}

In view of Corollary~\ref{diam}, the only remaining case to be considered is the following. 

\begin{theorem}\label{4}
If $n=p_1^{r_1} \dots p_t^{r_t}$, where $t>1$, $p_1=2$, and $ p_1 < \dots < p_t$, then $$\chi_\rho(G_{\Z_n})=p_1^{r_1-1} p_2^{r_2} \dots p_t^{r_t}(p_1-1).$$
 \end{theorem}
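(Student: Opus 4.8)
The plan is to combine the diameter information of Corollary~\ref{diam} with the exact value of the independence number, just as in the diameter-two cases, and then to supply a matching lower bound by hand. By Corollary~\ref{diam}, since $t>1$ and $p_1=2$, we have $\diam(G_{\Z_n})=3$, so Proposition~\ref{diam2} no longer yields equality; it gives only the upper bound $\chi_\rho(G_{\Z_n})\le n-\alpha(G_{\Z_n})+1$. First I would compute $\alpha(G_{\Z_n})$ exactly as in the proof of Theorem~\ref{33}: the sets $A^{p_i}_j$ are independent, and since $p_1=2$ is the smallest prime the largest of them is $A^{p_1}_1$, a single parity class, of order $p_1^{r_1-1}p_2^{r_2}\cdots p_t^{r_t}=n/2$. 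Thus $\alpha(G_{\Z_n})=n/2$.

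For the structural heart of the argument I would exploit that $\diam(G_{\Z_n})=3$: in any packing coloring every class $V_i$ with $i\ge 3$ is a $3$-packing, hence a singleton, so a packing coloring is determined by an independent set $V_1$, a $2$-packing $V_2$ disjoint from it, and one color per remaining vertex. Consequently $\chi_\rho(G_{\Z_n})=2+(n-|V_1|-|V_2|)$, and the whole problem reduces to maximizing $|V_1|+|V_2|$ over disjoint choices of an independent set and a $2$-packing.

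The next step is to bound the size of a $2$-packing. Reading a vertex through the isomorphism of Remark~\ref{struc}, two vertices lie at distance $3$ only if the difference of their first coordinates is a unit of $\Z_{2^{r_1}}$, i.e.\ the two vertices have opposite parity; since two same-parity vertices are always at distance $2$, a $2$-packing contains at most one vertex of each parity and therefore has at most two elements. Together with $\alpha(G_{\Z_n})=n/2$ this already gives the easy bound $\chi_\rho(G_{\Z_n})\ge 2+(n-\alpha(G_{\Z_n})-2)=n/2$.

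The main obstacle is the matching construction, and this is exactly where the determination of the additive constant becomes delicate. To realize the claimed value one must exhibit a \emph{maximum} independent set $V_1$ (of size $n/2$) together with a $2$-packing $V_2$ of size two disjoint from it. Here the two requirements pull against each other: every distance-$3$ pair consists of one even and one odd vertex, whereas one checks that the maximum independent sets are precisely the two parity classes (as for $G_{\Z_6}\cong C_6$). I would therefore devote the bulk of the proof to analysing this interaction — classifying the maximum independent sets and the maximum $2$-packings and deciding whether they can be chosen disjointly — since the precise value of $\max(|V_1|+|V_2|)$ is exactly what fixes the constant, and this is the step I expect to require the most care.
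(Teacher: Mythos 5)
Your proposal stops exactly where the real work begins, but the obstruction you refused to wave away is genuine---and it is fatal not to your plan but to the statement itself. The paper's own proof consists precisely of the three packing-size bounds you derived ($|V_1|\le n/2$ from Theorem~\ref{33}, $|V_2|\le 2$ via the opposite-parity characterization of distance-$3$ pairs, and $|V_i|\le 1$ for $i\ge 3$ since the diameter is $3$), after which it simply asserts the stated value with no construction. Those bounds yield only the lower bound $\chi_\rho(G_{\Z_n})\ge n/2$; attaining $n/2$ would require a color class $V_1$ that is a maximum independent set together with a disjoint two-element $2$-packing $V_2$, which is exactly the joint requirement you flagged as the crux. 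No such pair exists, so the paper's proof has a genuine gap and, worse, the theorem is off by one.

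To close your open step: your claim that the maximum independent sets are precisely the two parity classes is correct. Write $\Z_n\cong\Z_{2^{r_1}}\times\Z_m$ with $m=p_2^{r_2}\cdots p_t^{r_t}$ odd, and suppose an independent set $S$ meets both parity classes; let $P,Q\subseteq\Z_m$ be the projections of its even and odd parts. An even--odd pair differs by an odd number, hence by a unit of $\Z_{2^{r_1}}$ in the first coordinate, so nonadjacency forces $(P-Q)\cap\Z_m^\times=\emptyset$, i.e.\ $Q$ avoids $P+\Z_m^\times$. Since $1,2\in\Z_m^\times$ (as $m$ is odd), $|P+\Z_m^\times|=|P|$ would give $P+\Z_m^\times=P+1$, hence $P+1=P$ and $P=\Z_m$, forcing $Q=\emptyset$; therefore when both parts are nonempty, $|P|+|Q|\le m-1$ and $|S|\le 2^{r_1-1}(m-1)<n/2$. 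So any class $V_1$ of size $n/2$ is a parity class, its complement is the other parity class, in which every pair of vertices is at distance exactly $2$, so no two-element $2$-packing can be placed there. Consequently $\max\bigl(|V_1|+|V_2|\bigr)=\tfrac{n}{2}+1$ and
\[
\chi_\rho(G_{\Z_n}) \;=\; n+2-\Bigl(\tfrac{n}{2}+1\Bigr) \;=\; \tfrac{n}{2}+1 \;=\; p_1^{r_1-1}p_2^{r_2}\cdots p_t^{r_t}(p_1-1)+1,
\]
one more than the theorem claims. The smallest case already witnesses the error: $G_{\Z_6}\cong C_6$ and $\chi_\rho(C_6)=4$ (Goddard et al.~\cite{Goddard}), whereas the formula gives $3$. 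So carrying out your plan---classifying the maximum independent sets, noting that size-two $2$-packings are even--odd pairs, and concluding that the two demands are incompatible---does not recover the paper's result; it corrects it.
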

 
\begin{proof}
By the proof of Theorem \ref{33}, the greatest size of a 1-packing of $G_{\Z_n}$ is $p_1^{r_1-1} p_2^{r_2} \dots p_t^{r_t}$. By Corollary~\ref{diam}, each two distinct vertices of $G_{\Z_n}$ are at distance at most three. Also, by Remark~\ref{struc}, we have $G_{\Z_{2^{r_1}}}\cong  K_{2^{r_1-1}, 2^{r_1-1 }}$. Let $V_1$ and $V_2$ be the two parts of $G_{\Z_{2^{r_1}}}$. If $X=(x_1,\dots,x_t)$ and $Y=(y_1,\dots,y_t)$ are nonadjacent vertices of $G_{\Z_n}$, then $\dd_{G_{\Z_n}}(X,Y)=3$ if and only if $x_1 \in V_1$ and $y_1 \in V_2$, or vice versa. Now let $A$ be a 2-packing in $G_{\Z_n}$. If $|A|>2$, then there are three distinct vertices  $U_1=(x_1,\dots,x_t)$,  $U_2=(y_1,\dots,y_t)$, and  $U_3=(z_1,\dots,z_t)$ in $A$ such that $\dd_{G_{\Z_n}}(U_i,U_j)=3$, for each $1\leqslant i\neq j \leqslant 3$. Without loss of generality, assume that $x_1 \in V_1$ and $y_1 \in V_2$. If $z_1 \in V_1$, then we have $\dd_{G_{\Z_n}}(U_1,U_3)=2$, which is impossible. If $z_1 \in V_2$, then we have $\dd_{G_{\Z_n}}(U_2,U_3)=2$, which is again impossible. So any 2-packing in $G_{\Z_n}$ has at most two vertices and for $i >2$, each $i$-packing of $G_{\Z_n}$ has at most one vertex. We can conclude that  $\chi_\rho(G_{\Z_n})=p_1^{r_1-1} p_2^{r_2} \dots p_t^{r_t}(p_1-1)$.
\end{proof}

\section{Two metaheuristic algorithms}
\label{sec:algorithms}

Recall from the introduction that there is a strong case for utilizing meta-heuristic algorithms for the approximation  of the packing chromatic number. In this section we employ the local search and the genetic algorithm for solving this problem. We recall that the genetic algorithm is designed as a population-centric technique, whereas the local search algorithm focuses on point-based optimization.
 
\subsection{Local search algorithm}
\label{sec:LA}

We now present a local search (LS) algorithm for finding a packing $k$-coloring for a given graph $G$. 

The main algorithm's objective is to achieve a packing $k$-coloring by iteratively improving an initial solution. $Maxit$ in Algorithm \ref{alg1} denotes the maximum number of iterations chosen as a stopping criterion. In addition, for given $k$ the notation $c_{\T{fitness}}$ is an evaluation function for a coloring $c$ which is defined as $c_{\T{fitness}}=\frac{1}{1+I_c}$ where $I_c=\sum_{i=1}^k|\{\{u,v\}\subset V(G):\ u\neq v, c(u)=c(v)=i, \dd_G(u,v)\leq i\}|$. This process is performed by the sub-algorithm named the fitness calculation (FC) algorithm (FC for short) for each coloring. Algorithm~\ref{alg1} continues by generating neighboring solutions. If for a specific $k$, a neighbor solution $c^i$ is found such that $c^i_{\T{fitness}}=1$, we have a packing $k$-coloring. Otherwise,  another value of $k$ can be checked. Moreover, $N_c$ denotes a set of neighboring configurations, each of then differ from the current coloring $c$ in only one vertex's color.

\subsection{Detailed explanation of the LS algorithm}

\noindent
{\bf Initialization} \\
We start by generating a random coloring $c$ of $V(G)$ with $k$ colors. Then we apply the FC sub-algorithm to compute the fitness number $c_{\T{fitness}}$.
 If $c_{\T{fitness}}=1$, it’s immediately returned as the best solution.

\medskip\noindent
{\bf Local search loop}\\
If $c_{\T{fitness}}<1$, then the algorithm enters a local search loop. In each iteration, a set of neighboring configurations $N_c$ is generated.
For each neighbor configuration of $N_c$, the algorithm computes the fitness function and updates the current solution if a neighboring solution with grater fitness value is found.

\medskip\noindent
{\bf Updating best solution}\\
If the fitness value of the best neighboring solution is greater than $c_{\T{fitness}}$, then we update $S_{\T{best}}$; In other word,
If $f_{\T{N}}> f_{\T{best}}$, the algorithm sets $S_{\T{best}}=N_{\T{best}}$ and $f_{\T{best}}=f_{\T{N}}$. 
We recall that if $f_{\T{best}}=1$, then the algorithm returns $S_{\T{best}}$ as the best solution. Otherwise, the algorithm increments the iteration count.

\medskip\noindent
{\bf Termination}\\
The process continues until either a coloring $c$ with $c_{\T{fitness}}=1$ is found, the maximum number of iterations (Maxit) is reached, or no further improvement can be made.
Finally, the algorithm returns $S_{\T{best}}$ as the best solution.

\begin{algorithm}[ht!]
\caption{LS algorithm to find a packing $k$-coloring}\label{alg1}
 \begin{algorithmic}[1]
\REQUIRE Graph, $G$, the number of colors $k$, the number of iterations $Maxit$.
\ENSURE A packing $k$-coloring of $G$.\label{3}
\STATE Generate an arbitrary vertex coloring, $c$, for $G$.
\STATE Call FC algorithm for $c$ ($c_{\T{fitness}}\leftarrow FC(G,k,c)$).
\STATE Set $S_{\T{best}}=c$ and $f_{\T{best}}=c_{\T{fitness}}$.
\IF {$f_{\T{best}}=1$}  
\STATE Return $S_{\T{best}}$\label{4}. 
\ELSE
\STATE Let $count \leftarrow 1$.  //count is the iteration counter.
\WHILE {$count<=Maxit$}\label{7}
\STATE Generate neighborhood set $N_c$ of $c$.
\STATE Set $N_{\T{best}}=\emptyset$ and $f_{\T{N}}=0$.
\WHILE {$N_c \ne \varnothing$}
\STATE Select an element, $c^i$ of $N_c$ and call FC for it ($c^i_{\T{fitness}}\leftarrow FC(G,k,c^i)$).
\IF {$c^i_{\T{fitness}}>f_{\T{N}}$}
\STATE $N_{\T{best}}\leftarrow c^i$ and $f_{\T{N}}\leftarrow c^i_{\T{fitness}}$.
\ENDIF
\STATE $N_c\leftarrow N_c-\{c^i\}$.
\ENDWHILE
\IF {$f_{\T{N}}>f_{\T{best}}$}
\STATE Set $S_{\T{best}}=N_{\T{best}}$ and $f_{\T{best}}=f_{\T{N}}$.
\IF {$f_{\T{best}}=1$}
\STATE Return $S_{\T{best}}$ and break.
\ENDIF
\ENDIF
\STATE Set $c\leftarrow N_{\T{best}}$, $count\leftarrow count+1$.
\ENDWHILE
\ENDIF
\STATE Return $S_{\T{best}}$.
\end{algorithmic}
\end{algorithm}

\begin{algorithm}[ht!]
\caption{FC algorithm}\label{alg2}
 \begin{algorithmic}[1]
\REQUIRE graph $G$, number of colors $k$, coloring $c$.
\ENSURE The fitness value $c_{\T{fitness}}$.
\STATE Set $I_c=0$.
\FOR {$i=1$ to $k$}
\FOR {each $u,v\in V(G)$ such that $c(u)=c(v)=i$}
\IF {$\dd_G(u,v)\leq i$}
\STATE {Set $I_c\leftarrow I_c+1$.}
\ENDIF
\ENDFOR
\ENDFOR
\STATE Set $c_{\T{fitness}}=\frac{1}{1+I_c}$.
\STATE Return $c_{\T{fitness}}$.
\end{algorithmic}
\end{algorithm}

\newpage
\subsection{Genetic algorithm}

Here we apply the genetic algorithm for solving the packing coloring problem. 
This algorithm begins by generating an initial population of size $n_p$, and calculating its fitness values via algorithm FC. If a solution achieves the fitness value $1$, then it is immediately returned as the optimal solution. Otherwise, the algorithm proceeds to the main iterative phase.

At the beginning of each iteration, offspring and mutated populations are generated through crossover and mutation.
These populations are merged with the original, and less optimal solutions are removed to create a refined population of size $n_p$. 
This process is repeated until a maximum number of iterations is reached.
Finally, the algorithm selects the solution with the highest fitness value. If the fitness equals 1, it outputs a graph coloring. Otherwise,
the algorithm reports that it was unable to color the graph using $k$ colors.
Below, we describe the combination operations employed by our algorithm to generate a new population.

\medskip\noindent
{\bf Crossover 1} \\
In this process of generating a new offspring, two parents are randomly selected from the existing population. The resulting offspring is initially created as an exact copy of the second parent's genome. Subsequently, a specific modification mechanism is applied: any colors present in the first parent's genetic structure but absent in the second parent are randomly substituted into selected vertices of the offspring's structure.

\medskip\noindent
{\bf Crossover 2}\\
The process of generating new offspring begins with the random selection of two parents from the population. The initial offspring structure is formed through complete replication of the first parent's genome. Subsequently, in a targeted modification process, two vertices are randomly selected from the offspring's structure and their corresponding colors are transferred from the second parent. This genetic transfer mechanism ensures a balanced combination of characteristics from both parents.

\medskip\noindent
{\bf Mutation}\\
The process begins with random selection of a parent from the existing population. The mutated's initial structure is established through complete replication of the parent's genome. Subsequently, in a targeted modification process, two vertices are randomly selected from the mutated's structure, and their colors are interchanged. This mutation mechanism enhances genetic diversity within the population and facilitates the exploration of new regions in the search space.

\medskip
GA is presented as Algorithm 3.

\begin{algorithm}[ht!]
\caption{GA for the packing coloring problem}\label{alg3}
\begin{algorithmic}[1]
\REQUIRE $G$, $k$, maximum number of iterations $Maxit$, initial population's size $n_p$.
\ENSURE a $k$-packing coloring for $G$.
\STATE Create an initial populations with size $n_p$, and evaluate them using FC algorithm.
\STATE $count=1$.
\WHILE {$count\leq Maxit$}
\STATE Select parents randomly.
\STATE Generate and evaluate offspring using the crossover operators.
\STATE Generate and evaluate mutated populations using the mutation operator.
\STATE Merge the initial population, offspring, and mutated populations, and sort them based on the fitness function (new population).
\STATE Truncate new population and generate a new population with size $n_p$.
\STATE  $count=count+1$.
\ENDWHILE
\STATE Return the individual with the best fitness value as the best solution.
\end{algorithmic}
\end{algorithm}

\section{Experimental results}
\label{sec:experiment}

The section includes two parts: the initial part reviews the efficiency of GA, while the latter part delivers a comparative analysis of the three algorithms: LS, GA, and Greedy.
In what follows, $pc_{GA}$ and $pc_{Greedy}$ refer to the approximations of the packing chromatic number computed by the genetic algorithm and the greedy algorithm, respectively.
\subsection{GA efficiency}

Here we focus on assessing the effectiveness of the earlier described GA. We first present in Table~\ref{Tab1} a report on the algorithm’s output for graphs with known exact chromatic numbers as provided in~\cite{Bresar2}.

\begin{table}[ht!]
\caption{$\chi_\rho$ of some graphs computed by GA} \label{Tab1}
\begin{center}
\begin{tabular}{|c||c|c|c|}
		\hline
Graph name&$\chi_{\rho}$&$pc_{GA}$&GA CPU time (s)\\ \hline \hline 
$C_{15}$&4&4&0.458  \\ \hline
$C_{20}$&3&3&3.442 \\ \hline
$P_{20}$&3&3&5.721\\ \hline
$S_{10}$&2&2&0.120\\ \hline
$K_{3,5,7}$&9&9&0.372\\ \hline
\end{tabular}
\end{center}
\end{table}

GA has been next tested on some examples from Section~\ref{sec:unitary-Cayley}, see Table~\ref{Tab2}.

\begin{table}[h!]
\caption{$\chi_\rho$ of some unitary Cayley graphs computed by GA} \label{Tab2}
\begin{center}
\begin{tabular}{|c||c|c|c|}
		\hline
Graph name&$\chi_{\rho}$&$pc_{GA}$&GA CPU time (s)\\ \hline \hline 
$G_{\Z_{5}}$&5&5&0.002  \\ \hline
$G_{\Z_{16}}$&9&9&0.299 \\ \hline
$G_{\Z_{21}}$&15&15&2.103\\ \hline
$G_{\Z_{27}}$&19&19&1.893\\ \hline
$G_{\Z_{45}}$&31&31&22.196\\ \hline
\end{tabular}
\end{center}
\end{table}

\subsection{Comparison of the three algorithms}

Article \cite{Greedy} introduces a greedy algorithm designed to assist in determining the chromatic number of graphs. We compared the results obtained from both the greedy and genetic algorithms. Through an analysis of various graph samples, particularly a subset of Cayley graphs, we found that for some graphs, all three methods found packing colorings of the same cardinality. Some of these examples are grouped together in Table~\ref{Tab3}.

\begin{table}[ht!]
\caption{Graphs for which all three algorithms return the same value} \label{Tab3}
\begin{center}
\begin{footnotesize}
\begin{tabular}{|c||c|c|c|c|}
		\hline
Graph &value&\begin{tabular}{c}
Greedy CPU \\
 time (s)\end{tabular}&
\begin{tabular}{c}
GA CPU\\
 time (s)\end{tabular}
&\begin{tabular}{c}
LS CPU\\
 time (s)\end{tabular}\\ \hline \hline 
$\Cay(\Z_8,\{1,3,5,7\})$&5&0.001&0.039&0.004  \\ \hline
$\Cay(\Z_9,\{1,3,5,7\})$&7&0.002&0.029&0.007 \\ \hline
$\Cay(\Z_{12},\{1,3,9,11\})$&7&0.002&0.769&0.023\\ \hline
$\Cay(\Z_{8},\{1,2,3,5,6,7\})$&7&0.001&0.022&0.002\\ \hline
$\Cay(\Z_{9},\{1,2,3,6,7,8\})$&8&0.002&0.020&0.006\\ \hline
$\Cay(\Z_{12},\{1,2,3,9,10,11\})$&10&0.002&0.054&0.017\\ \hline
\end{tabular}
\end{footnotesize}
\end{center}
\end{table}

Although LS is faster than GA, it might get stuck in a local maximum and miss a better solution. Hence we  moved on to compare the genetic and the greedy algorithm on some larger graphs. In Table~\ref{Tab4} computational results are collected for the generalized Petersen graph $G(12,2)$, the graph BN16 (shown in \ref{fig:BN16}), the truncated icosahedral graph TI and the fullerene graphs $C_{48}$ and $C_{70}$.

\begin{figure}[ht!]
    \centering
    \includegraphics[width=0.4\linewidth]{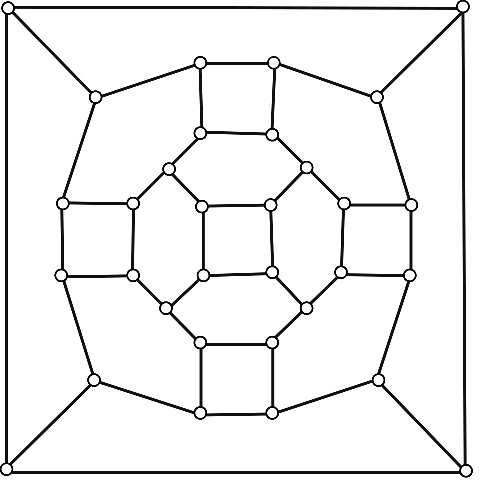}
    \caption{Graph BN16}
    \label{fig:BN16}
\end{figure}

\begin{table}[ht!]
\caption{Computational results for some additional graphs} \label{Tab4}
\begin{center}
\begin{small}
\begin{tabular}{|c||c|c|c|c|c|}
		\hline
Graph $G$ & $n(G)$ &$pc_{Greedy}$&\begin{tabular}{c}
Greedy CPU\\
 time (s)\end{tabular}&$pc_{GA}$&\begin{tabular}{c}
GA CPU\\
 time (s)\end{tabular}\\ \hline \hline 
$G(12,2)$ &24& \begin{tabular}{c}
13,11,13,\\
12,13,12
\end{tabular}&0.007&10&60.343  \\ \hline
BN16 &32&
\begin{tabular}{c}
14,14,13,\\
13,11,12
\end{tabular}&0.011&9&89.511 \\ \hline
$C_{48} $&48&
\begin{tabular}{c}
17,16,17,\\
16,19,18\end{tabular}&0.028&13&443.667\\ \hline
TI &60&\begin{tabular}{c}21,21,21,\\
17,19,23\end{tabular}&0.055&16&549.806\\ \hline
$C_{70}$ &70&\begin{tabular}{c}22,22,26,\\
21,24,25\end{tabular}&0.081&19&1237.402\\ \hline
\end{tabular}
\end{small}
\end{center}
\end{table}

A comparative analysis of the greedy and genetic algorithm reveals that as graph complexity and size increase, the performance gap between these methods becomes more pronounced. For instance, for the fullerene $C_{48}$, the greedy algorithm yielded inconsistent results across multiple iterations (e.g., 16, 17, 16, 17, 19, 18), highlighting its instability and inefficiency for complex graphs. On the other hand, GA found a packing coloring with 13 colors, but using for it 443.667 seconds. Nevertheless, GA demonstrated  its ability to explore a broader solution space and identify a better, most likely optimal result. 

These insights offer guidance for selecting graph coloring strategies in future research, especially for large and complex graphs. If an upper bound for the packing chromatic number can be established using some of the results from ~\cite{Bresar2} or elsewhere, it can serve as input for the genetic algorithm; otherwise, the greedy algorithm can provide an initial estimate, which can then be refined by the genetic algorithm.

\section*{Acknowledgments}

Sandi Klav\v zar were supported by the Slovenian Research and Innovation Agency (ARIS) under the grants P1-0297, N1-0355, and N1-0285. The research of the second author, Mostafa Tavakoli, was supported in part by the Ferdowsi University of Mashhad.

\section*{Declaration of interests}

The authors declare that they have no known competing financial interests or personal relationships that could have appeared to influence the work reported in this paper.

\section*{Data availability}
Our manuscript has no associated data.


\begin{thebibliography}{99} 
   
\bibitem{Akhtar} 
R.~Akhtar, M.~Boggess, T.~Jackson-Henderson, I.~Jim\' enez, R.~Karpman, A.~Kinzel, D.~Pritikin,
On the unitary Cayley graph of a finite ring, 
Electron.\ J.\ Combin.\ 16 (2009) R117.

\bibitem{bidine-2023}
E.~Bidine, T.~Gadi, M.~Kchikech, 
The exponential growth of the packing chromatic number of iterated {M}ycielskians,
Discrete Appl.\ Math.\ 341 (2023) 232--241.

\bibitem{bresar-2025}
B.~Bre\v sar, J.~Ferme, P.~Holub, M.~Jakovac, P.~Melicharov\'a, 
{$S$}-packing colorings of distance graphs with distance sets of cardinality $2$,
Appl.\ Math.\ Comput.\ 490 (2025) 129200.

\bibitem{Bresar2} 
B.~Bre\v{s}ar, J.~Ferme, S.~Klav\v{z}ar, D.F.~Rall, 
A survey on packing colorings,  
Discuss.\ Math.\ Graph Theory 40 (2020) 923--970.

\bibitem{Bresar}
B.~Bre\v{s}ar, S.~Klav\v{z}ar D.F.~Rall, 
On the packing chromatic number of Cartesian products, hexagonal lattice, and trees, 
Discrete Appl.\ Math.\ 155 (2007) 2303--2311.

\bibitem{burcroff-2023}
A.~Burcroff, 
Domination parameters of the unitary Cayley graph of $\Bbb Z/n\Bbb Z$,
Discuss.\ Math.\ Graph Theory 43 (2023) 95--114.

\bibitem{dolzan-2024}
D. Dol\v{z}an, 
The unitary Cayley graph of a semiring, 
Discrete Math.\ 347 (2024) 113873.

\bibitem{AAlg}
D.S.~Dummit, R.M.~Foote, 
Abstract Algebra, Third Edition, 
John Wiley \& Sons, Hoboken, NJ, 2004.

\bibitem {ferme-2021}
J.~Ferme, D.~\v Stesl,
On distance dominator packing coloring in graphs,
Filomat 35 (2021) 4005--4016.

\bibitem{ferme-2022}
J.~Ferme,
A characterization of $4$-$\chi_\rho$-(vertex-)critical graphs,
Filomat 36 (2022) 6481--6501.

\bibitem{fiala-2010}
J.~Fiala, P.A.~Golovach,
Complexity of the packing coloring problem for trees,
Discrete Appl.\ Math.\ 158 (2010) 771--778.

\bibitem{Goddard}
W.~Goddard, S.M.~Hedetniemi, S.T.~Hedetniemi, J.M.~Harris, D.F.~Rall, 
Broadcast chromatic numbers of graphs, 
Ars Combin.\ 86 (2008) 33--49. 
  
\bibitem{Greedy}
D.~G\"{o}z\"{u}pek, I.~Peterin, 
Grundy packing coloring of graphs,
Discrete Appl.\ Math.\ 371 (2025) 17--30. 

\bibitem{gregor-2024}
P.~Gregor, J.~Kranjc, B.~Lu\v zar, K.~\v Storgel, 
Packing coloring of hypercubes with extended {H}amming codes,
Discrete Appl.\ Math.\ 359 (2024) 269--277.

\bibitem{grochowski-2025}
H.~Grochowski, K.~Junosza-Szaniawski, 
Partial packing coloring and quasi-packing oloring of the triangular grid,
Discrete Math.\ 348 (2025) 114308.

\bibitem{EUZn}
A.~Ili\'c, 
The energy of unitary Cayley graphs,
Linear Algebra Appl.\ 431 (2009) 1881--1889.

\bibitem{kim-2018}
M.~Kim, B.~Lidick\'y, T.~Masa\v r\'ik, F.~Pfender,
Notes on complexity of packing coloring,
Inf.\ Process.\ Lett.\ 137 (2018) 6--10.

\bibitem{klavzar-2023}
S.~Klav\v{z}ar, H.~Lei, X.~Lian, Y.~Shi,
A characterization of $4$-$\chi_S$-vertex-critical graphs for packing sequences with $s_1=1$ and $s_2\geq 3$,
Discrete Appl.\ Math.\ 338 (2023) 46--55.

\bibitem{U1} 
W.~Klotz, T.~Sander, 
Some properties of unitary Cayley graphs, 
Electron.\ J.\ Combin.\ 14 (2007) \#R45.

\bibitem{kostochka-2021}
A.~Kostochka, X.~Liu,
Packing $(1,1,2,4)$-coloring of subcubic outerplanar graphs,
Discrete Appl.\ Math.\ 302 (2021) 8--15.

\bibitem{liu-2020}
R.~Liu, X.~Liu, M.~Rolek, G.~Yu,
Packing $(1,1,2,2)$-coloring of some subcubic graphs,
Discrete Appl.\ Math.\ 283 (2020) 626--630.

\bibitem{mortada-2024}
M.~Mortada,
About  $S$-packing coloring of $3$-irregular subcubic graphs,
Discrete Appl.\ Math.\ 359 (2024) 16--18.

\bibitem{mortada-2025}
M.~Mortada, O.~Togni, 
Further results and questions on  $S$-packing coloring of subcubic graphs,
Discrete Math.\ 348 (2025) 114376.

\bibitem{ratt-2024}
J.~Rattanakangwanwong, Y.~Meemark, 
Subgraph of unitary Cayley graph of matrix algebras induced by idempotent matrices,
Discrete Math.\ 347 (2024) 113739.

\bibitem{graph}
D.B.~West, 
Introduction to Graph Theory, Second Edition, 
Prentice Hall, Upper Saddle River, NJ, 2001. 

\bibitem{yang-2023}
W.~Yang, B.~Wu, 
On packing $S$-colorings of subcubic graphs,
Discrete Appl.\ Math.\ 334 (2023) 1--14.

\end{thebibliography}
 \end{document}